
\documentclass[a4paper,11pt]{article}

\usepackage{a4wide}

\usepackage{amsmath,amsthm}
\usepackage{xspace}
\usepackage{pifont}
\usepackage{graphicx}
\usepackage{amssymb}
\usepackage{epic, eepic}
\usepackage{dsfont}
\usepackage{amssymb}
\usepackage{makeidx}
\usepackage{mathrsfs}
\usepackage{exscale}
\usepackage{color} 
\usepackage{overpic} 
\usepackage{bm}
\usepackage{bbm}

\usepackage{amsmath,afterpage}
\usepackage{epsf}
\usepackage{graphics,color}

\def\0{\mathbf{0}}

\def\eps{\varepsilon}

\def\rr{\rightarrow}

\def \< {\langle}
\def \> {\rangle}

\def\beqa{\begin{eqnarray}}
\def\eeqa{\end{eqnarray}}
\def\beqas{\begin{eqnarray*}}
\def\eeqas{\end{eqnarray*}}

\newtheorem{theorem}{Theorem}[section]
\newtheorem{lemma}[theorem]{Lemma}

\newtheorem{corollary}[theorem]{Corollary}

\numberwithin{equation}{section}
\newcommand{\hatd}[1]{{}}




\setcounter{section}{0}


\newcommand{\bd}{\begin{displaymath}}
\newcommand{\ed}{\end{displaymath}}
\newcommand{\be}{\begin{equation}}
\newcommand{\ee}{\end{equation}}
\newcommand{\bq}{\begin{eqnarray}}
\newcommand{\eq}{\end{eqnarray}}
\newcommand{\bn}{\begin{eqnarray*}}
\newcommand{\en}{\end{eqnarray*}}
\newcommand{\dl}{\delta}
\newcommand{\re}{\mathds{R}}

\def\wt{\widetilde}


\begin{document}

\title{Fractional Brownian motion with zero Hurst parameter:\\
a rough volatility viewpoint}
\author{Eyal Neuman\thanks{Imperial College London, Department of Mathematics, e.neumann@imperial.ac.uk}, Mathieu Rosenbaum\thanks{\'Ecole Polytechnique, CMAP, mathieu.rosenbaum@polytechnique.edu}}
\maketitle

\abstract \noindent Rough volatility models are becoming increasingly popular in quantitative finance. In this framework,
one considers that the behavior of the log-volatility process of a financial asset
is close to that of a fractional Brownian motion with Hurst parameter around 0.1. Motivated by this, we wish to define a natural and relevant limit for the fractional Brownian motion when $H$ goes to zero. We show that once properly normalized, the fractional Brownian motion converges to a Gaussian random distribution which is very close to a log-correlated random field.\\

\noindent {\it Keywords}: Fractional Brownian motion, log-correlated random field, rough volatility, multifractal processes.
\bigskip

\parindent=0mm

\section{Introduction}   \label{section-intro} 

The fractional Brownian motion (fBm for short) is a very popular modeling object in many fields such
as hydrology, see for example \cite{molz1997fractional}, telecommunications and network
traffic, see  \cite{leland1994self,mikosch2002network} among others and finance, see the seminal paper \cite{comte1998long}. A fBm $(B_t^H)_{t \in \mathbb{R}}$ with Hurst parameter $H \in (0,1)$
is a zero-mean Gaussian process with covariance kernel given by
\[
\mathbb{E}[B_t^H B_s^H]= \frac{1}{2} \left(|t|^{2H} + |s|^{2H} -|t-s|^{2H} \right).
\] 

It has stationary increments and is self-similar with parameter $H$, that is
$(B_{at}^H)_{t \in\mathbb{R}}$ has the same law as $(a^HB_{t}^H)_{t \in\mathbb{R}}$ for any $a>0$. Furthermore, sample paths of fBm have almost surely
H\"older regularity $H-\varepsilon$ for any $\varepsilon>0$.\\

One of the main probabilistic features which motivates the use of fBm in the applications mentioned above, is
the long memory property of the increments when $H>1/2$. This means that for $H>1/2$, we have
$$\sum_{i=1}^{+\infty}\text{Cov}[(B_{i+1}^H-B_i^H),B_1^H]=+\infty.$$ Thus the auto-covariance function of the fBm increments decays slowly, which is interesting when modeling persistent phenomena.\\ 

However, recently, a new paradigm has been introduced in \cite{Gat-Jai-Ros14} for the use of fBm in finance. Indeed, a careful analysis of financial time series suggests that the log-volatility process, 
that is the intensity of the price variations of an asset, actually behaves like a fBm with Hurst parameter of order $0.1$. Hence various approaches using a fBm with small Hurst parameter have been introduced for volatility modeling. These models are referred to as {\it rough volatility models}, see \cite{bayer2017regularity,bayer2016pricing,bayer2017short, bennedsen2017hybrid,el2016characteristic,forde2015asymptotics,fukasawa2017short,jacquier2017pathwise} for more details and practical applications.\\

Such small estimated values for $H$ (between 0.05 and 0.2) have been found when studying the volatility process of thousands of assets, 
see \cite{bennedsen2016decoupling}. Consequently, a natural question is the behavior of the fBm in the limiting case when $H$ is sent to zero. Of course, putting
directly $H=0$ in the covariance function does not lead to a relevant process. Thus, in this work, we wish to 
build a suitable sequence of normalized fBms and describe its limit as $H$ goes to zero. This will lead us to a possible definition of the fractional Brownian motion for $H=0$. Note that several authors already defined some fractional Brownian motion for $H=0$, see in particular \cite{FKS16}. This is usually done through a regularization procedure. Our approach here is quite simple and probably more natural from the financial viewpoint we have in mind. Instead of regularizing the process, we choose to normalize it in order to get a non-degenerate limit. Our normalized sequence of processes $(X^{H}_{.})_{H\in(0,1)}$ is defined through
$$
X^{H}_{t}=\frac{B_{t}^{H}-\frac{1}{t}\int_{0}^{t}B^{H}_{u}\,du}{\sqrt{H}}, \quad  t\in\mathbb{R},
$$
where $X_{0}^{H}=0$. Substracting the integral in the numerator and dividing by $\sqrt{H}$ enables us to get a non-trivial limit for our sequence as $H$ goes to $0$.\\

\noindent Our main result is the convergence of $X^{H}$, seen as a random element in the space of tempered distributions, towards an approximately {\it log-correlated Gaussian field}. Denote by $\mathcal S$ the real Schwartz space, that is the set of real-valued functions on $\re$ whose derivatives of all orders exist and decay faster than any polynomial at infinity. We write $\mathcal S'$ for the dual of $\mathcal S$, that is the space of tempered distributions. We also define the subspace $\mathcal S_{0}$ of the real Schwartz space, consisting of functions $\phi$ from $\mathcal S$ with $\int_{\re}\phi(s)\,ds=0$, and its topological dual $\mathcal S' / \re$. A log-correlated Gaussian field (LGF for short) $X\in \mathcal S'/ \re$, is a centered Gaussian field whose covariance kernel satisfies 
$$
\mathbb{E}[\langle X^{}_{}, \phi_{1} \rangle^{}\langle X^{}_{}, \phi_{2} \rangle^{}] =  \int_{\re}\int_{\re} \log \frac{1}{|t-s|} \phi_{1}(t)\phi_{2}(s)\,dt\,ds, 
$$
for any $\phi_{1},\phi_{2}\in \mathcal S_{0}$, see \cite{Dup-Rho-She-Var2014} for an overview on LGF. We show in this paper that the limit of $X^{H}$ as $H$ goes to zero is ``almost" a log-correlated Gaussian field, see Section \ref{section-results} for an accurate result.\\

LGFs are closely related to some multifractal processes pioneered by Mandelbrot (see for example \cite{Man-Cal-Fis97}), and further developed in \cite{bacry2001multifractal,Muz-Bac03, Bar-Men02, cal-fis04}, among others. A process $(Y_t)_{t\geq 0}$ is said to be multifractal if for a range of values of $q$, we have for some $T>0$ 
$$
\mathbb{E}\big[|Y_{t+\ell}-Y_t|^{q}\big] \sim C(q)\ell^{\zeta(q)}, \quad \textrm{for } 0<\ell \leq T, 
$$
where $C(q)>0$ is a constant and $\zeta(\cdot)$ is a non-linear concave function. In particular, the multifractal random walk model for the log-price of an asset  in \cite{bacry2001multifractal} satisfies such property. It is defined as $Y_t=B_{M([0,t])},$
where $B$ is a Brownian motion and $$M(t)=\underset{l\rightarrow 0}{\text{lim }}\sigma^2\int_0^t e^{w_l(u)}du, \text{ a.s.},$$ with $\sigma^2>0$ and
$w_l$ a Gaussian process such that for some $\lambda^2>0$ and $T>0$ 
$$\text{Cov}[w_l(t),w_l(t')]=\lambda^2\text{log}(T/|t-t'|), \text{ for }l<|t-t'|\leq T,$$   
see \cite{Muz-Bac03} for details. Hence we see that $M$ formally corresponds to a measure of the form $\text{exp}(X_t)dt$, where $X$ is a LGF. For the precise definition of such measures, see \cite{kahane85} and the generalizations on Gaussian multiplicative chaos in \cite{Rhodes-Vargas14,vargas16,Robert-Vargas10} and the references therein. Finally note that LGFs and more generally the associated theory of Gaussian multiplicative chaos have extensive use in other fields than finance, such as turbulence, see \cite{Che-Rob-Var10,Fyo-Dou-Ros10}, disordered systems, see \cite{FyoBcu08,madaule2016} and Liouville quantum gravity, see \cite{Rhodes-Vargas14,vargas16}.\\ 

In the following section we introduce our main theorem, that is an accurate statement about the convergence of the normalized fBm towards a LGF as $H$ goes to zero. We also discuss the multifractal properties of the limiting LGF in the same section. The proof of our theorem can be found in Section \ref{theorem-proof}.  
 
\section{Convergence of the fBM towards a LGF} \label{section-results}
\subsection{Main result}
We define the weak convergence of elements in $\mathcal S'$ as in Proposition 12.2 in \cite{Lod-She-Sun-Wat16}. 
We say that $X^{H}$ converges weakly to $X$ as $H$ tends to $0$ if for any $\phi \in \mathcal S$ we have 
\bn
\langle X^{H}, \phi \rangle \rr \langle X^{}, \phi \rangle, 
\en
in law, as $H$ tends to $0$. The main result of our paper is the following.
\begin{theorem}  \label{theorem-conv} 
The sequence $\{X_{t}^{H}\}_{t \in \re}$ converges weakly as $H$ tends to zero towards a centered Gaussian field $X$ satisfying for any $\phi_{1},\phi_{2}\in \mathcal S$
$$\mathbb{E}[\langle X^{}_{}, \phi_{1} \rangle^{}\langle X^{}_{}, \phi_{2} \rangle^{}] =  \int_{\re}\int_{\re} K(t,s) \phi_{1}(t)\phi_{2}(s)\,dt\,ds, 
$$
where for $-\infty<s,t<\infty$, $s\neq t$ and $s,t\neq 0$ $$
 K(t,s)=\log\frac{1}{|t-s|} +g(t,s), 
$$
with 
$$
g(t,s)=\frac{1}{t}\int_{0}^{t}\log|s-u|du+\frac{1}{s}\int_{0}^{s}\log|t-u|du-\frac{1}{ts}\int_{0}^{t}\int_{0}^{s}\log|u-v|du dv.$$
 \end{theorem}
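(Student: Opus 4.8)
The plan is to prove convergence of the Gaussian family $\langle X^H,\phi\rangle$ by establishing convergence of the covariance structure, since a limit of centered Gaussian random variables with convergent variances is again Gaussian with the limiting variance. Concretely, for fixed $\phi_1,\phi_2\in\mathcal S$, the pairing $\langle X^H,\phi_i\rangle=\int_{\re} X^H_t\,\phi_i(t)\,dt$ is a centered Gaussian random variable (a linear functional of the Gaussian process $X^H$), so the pair $(\langle X^H,\phi_1\rangle,\langle X^H,\phi_2\rangle)$ is jointly Gaussian. Hence it suffices to show that
\begin{equation*}
\mathbb{E}\big[\langle X^H,\phi_1\rangle\,\langle X^H,\phi_2\rangle\big]
=\int_{\re}\int_{\re} K_H(t,s)\,\phi_1(t)\phi_2(s)\,dt\,ds
\ \longrightarrow\
\int_{\re}\int_{\re} K(t,s)\,\phi_1(t)\phi_2(s)\,dt\,ds,
\end{equation*}
where $K_H(t,s)=\mathbb{E}[X^H_t X^H_s]$, and that convergence of the (co)variances forces convergence in law of the Gaussian vector to a centered Gaussian with covariance given by $K$. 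Taking $\phi_1=\phi_2=\phi$ along the way handles the one-dimensional marginals, and the joint convergence then identifies the limiting field $X$.

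\medskip

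\noindent First I would compute $K_H(t,s)$ explicitly. From the definition
$X^H_t=H^{-1/2}\big(B_t^H-\tfrac1t\int_0^t B_u^H\,du\big)$, expanding the product $X^H_t X^H_s$ gives a sum of four terms, each an expectation of products of fBm values, which are known via the covariance kernel $\tfrac12(|t|^{2H}+|s|^{2H}-|t-s|^{2H})$. Collecting terms and dividing by $H$, I expect to obtain an expression of the form
\begin{equation*}
K_H(t,s)=\frac{1}{2H}\Big(\text{terms in }|t-s|^{2H},\ |t-u|^{2H},\ |s-v|^{2H},\ |u-v|^{2H}\Big),
\end{equation*}
where the powers $|\,\cdot\,|^{2H}$ are averaged against $\tfrac1t\int_0^t du$ and $\tfrac1s\int_0^s dv$. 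The crucial analytic fact is the pointwise expansion $a^{2H}=1+2H\log a+o(H)$ as $H\to0$ for fixed $a>0$, so that $(a^{2H}-1)/(2H)\to\log a$. The constant terms ($+1$'s) in the four pieces must cancel exactly against each other; it is precisely the subtraction of the running average $\tfrac1t\int_0^t B_u^H\,du$ that engineers this cancellation and makes the $H^{-1}$ normalization produce a finite limit. After the constants cancel, the surviving logarithmic terms assemble into $\log|t-s|$ plus the three averaged-log terms, yielding $K(t,s)=\log\frac{1}{|t-s|}+g(t,s)$ with $g$ exactly as stated.

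\medskip

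\noindent The main obstacle will be justifying the interchange of the $H\to0$ limit with the spatial integrations against $\phi_1,\phi_2$ and against the inner averaging integrals $\tfrac1t\int_0^t$ and $\tfrac1s\int_0^s$. The pointwise expansion $a^{2H}\to1+2H\log a$ is not uniform as $a\to0$, so the integrands $(|t-s|^{2H}-1)/(2H)$ develop the integrable-but-unbounded singularity $\log|t-s|$ near the diagonal and near the origin (the points $s=t$ and $s,t=0$ excluded in the statement). I would control this by showing a domination estimate: for $H$ in a neighborhood of $0$ the integrands are bounded by an integrable majorant (a combination of $\log$ singularities and polynomial tails tamed by the Schwartz decay of $\phi_1,\phi_2$), and then invoke dominated convergence to pass $H\to0$ under all integrals simultaneously. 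A convenient route is to bound $\big|(a^{2H}-1)/(2H)-\log a\big|$ by an $H$-independent integrable function using $|a^{2H}-1|\le 2H|\log a|\,a^{2H}$-type inequalities on each region $a\le1$ and $a>1$ separately, together with the fact that $\log|t-s|$, $\log|t-u|$, $\log|s-v|$, and $\log|u-v|$ are locally integrable on $\re$. Once the covariance convergence is established with the correct limit $K$, the convergence of the finite-dimensional Gaussian laws follows from the standard fact that convergence of means and covariances of Gaussian vectors implies convergence in distribution, which by the definition of weak convergence in $\mathcal S'$ adopted above completes the proof.
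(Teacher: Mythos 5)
Your proposal is correct and follows essentially the same route as the paper: reduction to covariance convergence by joint Gaussianity, the four-term decomposition of $K_H$ with exact cancellation of the $\tfrac{1}{2H}$ constants (the role of subtracting the running average), the pointwise limit $(a^{2H}-1)/(2H)\to\log a$, and dominated convergence with $H$-uniform majorants built from $1-e^{-x}\le x$ near the diagonal and a concavity bound $\frac{a^{2H}-1}{2H}\le a-1$ for $a\ge 1$, tamed by Schwartz decay. The only execution difference is that the paper evaluates the inner averaging integrals in closed form (and checks the sign cases $0<s<t$, $s<t<0$, $s<0<t$ explicitly) rather than passing the limit under them by a second application of dominated convergence, but this is a matter of bookkeeping, not of method.
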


We see that when $t,s>\dl$ for some $\dl>0$, then $g(t,s)$ is a bounded continuous function. Hence the covariance kernel exhibits the same type of singularity as that of a LGF. Consequently, in our framework, the limit when $H$ goes to zero of a normalized version of the fBm is ``almost" a LGF.

\subsection{Multifractal properties}

In \cite{Rhodes-Vargas14}, the authors study the case of centered Gaussian fields on any domain $D \subset \re $ with  covariance kernel satisfying
\be  \label{var-ker}
K(x,y)= \log_{+}\frac{1}{|x-y|}+f(x,y),  \quad x,y\in D, 
\ee
where $\log_{+}(x) :=\max\{0,\log x\}$ and $f(x,y)$ is a bounded continuous function. Thus, if we restrict $X$ to the domain $[\dl,1]$ for some fixed $\dl>0$, then $X$ is included in the framework of \cite{Rhodes-Vargas14}. In particular, their results on the multifractal spectrum of Gaussian fields apply on this restricted domain. \\

Motivated by the preceding paragraph, we first fix $\dl>0$ and define an approximate volatility measure $\xi_{\gamma}^{H}$ by
\bn
\xi_{\gamma}^{H}(dt) = e^{\gamma X^{H}_{t}-\frac{\gamma^{2} }{2}\mathbb{E}[(X_{t}^{H})^{2}]}dt,  \quad \dl \leq t\leq 1, 
\en
for some constant $\gamma>0$. Here we assume that $\xi_{\gamma}^{H}(\cdot)$ vanishes on $[\dl,1]^{c}$. 
From the result of Theorem \ref{theorem-conv} we deduce the following corollary. In what follows, convergence in the $L^{1}$ norm stands for the usual convergence of random variables in $L^{1}$.
\begin{corollary} \label{corollary-gmc} 
For $\gamma < \sqrt{2}$, $\{\xi_{\gamma}^{H}\}_{H\in (0,1)}$ converges as $H$ approaches zero to a random measure $\xi_{\gamma}$ in the following sense, 
$$
 \int_{\re}\phi(t)\xi_{\gamma}^{H}(dt) \stackrel{L^{1}}{\rightarrow}  \int_{\re}\phi(t)\xi_{\gamma}(dt),\quad \textrm{for all } \phi \in \mathcal S. 
$$ 
Moreover, the limiting measure $\xi_{\gamma}$ is the so-called Gaussian multiplicative chaos. 
\end{corollary}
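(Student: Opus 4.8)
The plan is to view $\{X^{H}\}_{H}$, restricted to $[\dl,1]$, as a family of \emph{genuine} continuous centered Gaussian processes (for each fixed $H>0$ the fBm $B^{H}$ has a continuous modification, hence so does $X^{H}$ on $[\dl,1]$, where the term $\tfrac1t\int_{0}^{t}B_{u}^{H}\,du$ causes no trouble since $\dl>0$) whose covariance kernels $K_{H}(t,s):=\mathbb{E}[X_{t}^{H}X_{s}^{H}]$ converge, as $H\to0$, to the kernel $K$ of Theorem~\ref{theorem-conv}. Because on $[\dl,1]$ one has $|t-s|\le1$ and hence $K(t,s)=\log_{+}\frac{1}{|t-s|}+g(t,s)$ with $g$ bounded and continuous, the limiting field falls within the framework (\ref{var-ker}), so the associated Gaussian multiplicative chaos $\xi_{\gamma}$ exists and is non-degenerate precisely in the subcritical regime $\gamma^{2}<2d=2$ (here $d=1$), i.e. $\gamma<\sqrt2$; its existence and uniqueness follow from Kahane's theory, see \cite{kahane85,Rhodes-Vargas14}. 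It thus suffices to show that the concrete approximating sequence $\xi_{\gamma}^{H}$ converges, in the stated $L^{1}$ sense, to this $\xi_{\gamma}$. To give meaning to $L^{1}$ convergence I would first realize all the $B^{H}$ on one probability space through a common noise, e.g. a single two-sided Brownian motion via the Mandelbrot--Van Ness representation, so that the cross-covariances $\mathbb{E}[X_{t}^{H}X_{s}^{H'}]$ are defined and, choosing the coupling compatibly with the estimates underlying Theorem~\ref{theorem-conv}, also converge to $K(t,s)$ as $H,H'\to0$.

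The core mechanism is a second-moment computation. Fix $\phi\in\mathcal S$ and set $I_{H}=\int_{\dl}^{1}\phi(t)\,\xi_{\gamma}^{H}(dt)$ (the integral reduces to $[\dl,1]$ since $\xi_{\gamma}^{H}$ vanishes elsewhere). Since $X_{t}^{H}$ is centered Gaussian, the Wick normalization makes $\mathbb{E}[I_{H}]=\int_{\dl}^{1}\phi(t)\,dt$ independent of $H$, and
\[
\mathbb{E}[I_{H}I_{H'}]=\int_{\dl}^{1}\int_{\dl}^{1}\phi(t)\phi(s)\,\exp\!\big(\gamma^{2}\,\mathbb{E}[X_{t}^{H}X_{s}^{H'}]\big)\,dt\,ds .
\]
In the $L^{2}$ phase $\gamma<1$ the limiting integrand $\phi(t)\phi(s)\,|t-s|^{-\gamma^{2}}e^{\gamma^{2}g(t,s)}$ is integrable over $[\dl,1]^{2}$, so after exhibiting an $H$-uniform integrable majorant for $\exp(\gamma^{2}K_{H})$ one passes to the limit by dominated convergence; this gives $\mathbb{E}[(I_{H}-I_{H'})^{2}]\to0$, so $(I_{H})$ is Cauchy in $L^{2}$ with limit $\int\phi\,d\xi_{\gamma}$. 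Producing the majorant amounts to controlling $K_{H}(t,s)+\log|t-s|$ uniformly in $H$, which is again read off from the proof of Theorem~\ref{theorem-conv}.

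The genuinely hard part is the range $1\le\gamma<\sqrt2$, where the second moment diverges and the argument above collapses. Here I would follow the standard route of Gaussian multiplicative chaos: introduce a truncation of the field on a good event where $X_{t}^{H}$ stays below a barrier growing linearly in its variance $\mathbb{E}[(X_{t}^{H})^{2}]$, for which the truncated measures have finite second moments and converge by the preceding computation, and then show that the mass removed by the cutoff is negligible uniformly in $H$. This uniform-integrability estimate—establishing both that $\{I_{H}\}$ is uniformly integrable and that the truncation error tends to zero uniformly in $H$—is the main obstacle and is exactly where the subcriticality $\gamma<\sqrt2$ enters. Combined with the convergence in probability coming from the truncated $L^{2}$ limits, it upgrades convergence in probability to convergence in $L^{1}$. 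Alternatively, one may invoke a universality/convergence theorem for GMC under convergence of covariances (see \cite{Rhodes-Vargas14} and the references therein), whose hypotheses are supplied by the covariance convergence of the first paragraph.

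Finally, whichever route is taken, the limit is identified as the Gaussian multiplicative chaos of the log-correlated field $X$ through Kahane's uniqueness principle: two approximating families whose covariances converge to the same kernel $K$ of the form (\ref{var-ker}) yield, via the convexity (Kahane) inequality, the same limiting random measure. This simultaneously proves the convergence and the asserted identification of $\xi_{\gamma}$ as Gaussian multiplicative chaos.
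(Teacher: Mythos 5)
Your skeleton agrees with the paper's: realize all the fields over a single Brownian motion via the Mandelbrot--Van Ness representation \cite{Man-Van68}, combine convergence of covariances with uniform integrability, and identify the limit as Gaussian multiplicative chaos. But the decisive step --- uniform integrability of $\{\xi_{\gamma}^{H}\}$ over the whole subcritical range $1\leq \gamma<\sqrt{2}$ --- is precisely what you leave unproven: you name it ``the main obstacle'' and sketch a barrier/truncation plan without carrying out any estimate, so your text is a restatement of the problem rather than a proof. The paper's entire argument consists of supplying this estimate: from (\ref{KH-deomp}) and (\ref{I24-bnd}) one extracts the $H$-uniform kernel bound $K_{H}(t,s)\leq \log\frac{1}{|t-s|\vee e^{-1/(2H)}}+C(\dl)$ on $[\dl,1]^{2}$; one then introduces the cone-kernel field $\wt X^{\eps}$ of (\ref{log-approx}) with $\eps=e^{-1/(2H)}$, whose kernel dominates $K_{H}$ and whose chaos satisfies $\sup_{\eps>0}\mathbb{E}\big[\big(\wt\xi_{\gamma}^{\eps}([0,1])\big)^{p}\big]<\infty$ for some $p(\gamma)>1$ whenever $\gamma^{2}<2$ (proof of Proposition 3.5 in \cite{Robert-Vargas10}); Kahane's convexity inequality (Corollary A.2 in \cite{Robert-Vargas10}) with $F(x)=x^{p}$ transfers this to $\sup_{H\in(0,1)}\mathbb{E}\big[\xi_{\gamma}^{H}([\dl,1])^{p}\big]<\infty$, giving uniform integrability, after which Theorem 25 of \cite{shamov16} yields both the $L^{1}$ convergence and the identification of the limit as GMC in one stroke. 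Note that the comparison inequality is used here for a uniform moment bound, not (as in your last paragraph) merely for uniqueness of limits.

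Two further soft spots. First, your $L^{2}$-phase Cauchy argument requires convergence of the cross-covariances $\mathbb{E}[X_{t}^{H}X_{s}^{H'}]$ as $H,H'\to0$; Lemma \ref{lemma-con} treats only $H=H'$, and your phrase ``choosing the coupling compatibly'' asserts rather than proves this --- under the Mandelbrot--Van Ness coupling it is a genuinely new computation, which the paper sidesteps entirely by invoking Shamov's theorem (where subcritical GMC is a measurable functional of the underlying noise). Second, your closing appeal to a ``Kahane uniqueness principle'' --- that any two approximating families whose covariances converge to the same kernel of the form (\ref{var-ker}) yield the same chaos --- overstates the classical theory: Kahane's convergence and uniqueness results \cite{kahane85} concern $\sigma$-positive kernels exhausted by increasing sums of positive kernels, and $K_{H}$ is neither a mollification of $K$ nor monotone in $H$. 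Covariance convergence alone does not imply convergence of the chaoses; uniform integrability is exactly the additional hypothesis that must be verified, so the identification step also rests on the estimate you skipped.
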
  

The proof Corollary \ref{corollary-gmc} is given in Section \ref{theorem-proof}.
For the definition and properties of Gaussian multiplicative chaos we refer to a survey paper by Rhodes and Vargas \cite{Rhodes-Vargas14}. We will briefly explain some of the properties of $\xi_{\gamma}$ from the theory of Gaussian multiplicative chaos.  \\ \\
We first describe the behavior of the moments of $\xi_{\gamma}$. Note that for $t\in [\dl , 1]$, $K$ is of the form (\ref{var-ker}).  From Proposition 2.5 in \cite{vargas16}, it follows that for all $t\in (\dl , 1)$ and $q\in (-\infty, 2/\gamma^{2})$, there exists $C(t,q)>0$ such that 
$$ 
\mathbb{E}\big[\xi_{\gamma}\big(B(t,r)\big)^{q}] \sim C(t,q)r^{\zeta(q)},  \quad \textrm{as } r\rr0, 
$$
where $\zeta(q) = (1+\gamma^{2}/2)q - \gamma^{2}q^{2}/2$. Hence we do obtain a multifractal scaling as described in the introduction.\\

Finally we consider a quantity closely related to the function $\zeta$ above: the spectrum of singularities of $\xi_{\gamma}$. For any  $0<\gamma < \sqrt{2}$ and $0<r<\sqrt{2}/{\gamma}$, we define 
\bn
 G_{\gamma,r} =\big\{ x\in (\dl,1); \ \lim_{\eps\rr 0 }\frac{\log \xi_{\gamma}(B(x,\eps))}{\log \eps} =1+\big(\frac{1}{2}-r\big)\gamma^{2} \big \}. 
\en

The set $G_{\gamma,r}$ somehow corresponds to the points $x$ where the H\"older regularity of $\xi_{\gamma}$ is equal to $1+(1/2-r)\gamma^{2}$.
Let $dim_{H}(A)$ denote the Hausdorff dimension of a set $A$. Theorem 2.6 in \cite{vargas16} states that  
\bn
dim_{H}(G_{\gamma,r}) = 1-\frac{\gamma^{2}r^{2}}{2}.
\en
In particular, we remark that
$$dim_{H}(G_{\gamma,r})=\underset{p\in \mathbb{R}}{\text{inf}}\Big\{p\big(1+\big(\frac{1}{2}-r\big)\gamma^{2}\big)-\zeta(p)+1\Big\}.$$
This equality means that the Frish-Parisi conjecture relating the scaling exponents of a process to its spectrum of singularities holds in our case, see \cite{frisch1985singularity} and \cite{Rhodes-Vargas14} for more details on the multifractal formalism.\\


The rest of this paper is devoted to the proofs of Theorem \ref{theorem-conv} and Corollary \ref{corollary-gmc}. 

\section{Proof of Theorem \ref{theorem-conv} and Corollary \ref{corollary-gmc}.}\label{theorem-proof} 
For $t,s\in\mathbb{R}$, let $K_{H}(t,s)=\mathbb{E}[X^H_tX^H_s]$ . We start the proof with the following important lemma. 
\begin{lemma}  \label{lemma-con}
For any non-zero $-\infty <s, t <\infty$ with $s\neq t$, we have 
 \be \label{K-con}
\lim_{H\rr 0} K_{H}(t,s) 
= K(t,s). 
\ee
\end{lemma}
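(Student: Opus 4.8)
The plan is to compute $K_H(t,s)$ in closed form from the fBm covariance and then pass to the limit $H\rr 0$ term by term. Writing $c_H(u,v)=\tfrac12(|u|^{2H}+|v|^{2H}-|u-v|^{2H})$ for the covariance kernel of $B^H$, expanding the definition of $X^H$ and using bilinearity of the covariance gives
\be
K_H(t,s)=\frac1H\Big[c_H(t,s)-\frac1s\int_0^s c_H(t,v)\,dv-\frac1t\int_0^t c_H(u,s)\,du+\frac1{ts}\int_0^t\int_0^s c_H(u,v)\,du\,dv\Big].
\ee
The key structural observation is that the linear map $f\mapsto f(t)-\tfrac1t\int_0^t f(u)\,du$ annihilates constants. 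Applying this map in each of the two variables to $c_H$ therefore kills the summands $\tfrac12|u|^{2H}$ and $\tfrac12|v|^{2H}$, each of which is constant in one of the variables, so that only the cross term $-\tfrac12|u-v|^{2H}$ survives. Since the same map also annihilates the constant function $1$, I may subtract $1$ from the surviving kernel without changing the value, obtaining
\be
K_H(t,s)=-\frac1{2H}\Big[(|t-s|^{2H}-1)-\frac1s\int_0^s(|t-v|^{2H}-1)\,dv-\frac1t\int_0^t(|u-s|^{2H}-1)\,du+\frac1{ts}\int_0^t\int_0^s(|u-v|^{2H}-1)\,du\,dv\Big].
\ee

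Next I would introduce $\psi_H(x):=(|x|^{2H}-1)/(2H)$, so that the bracket above equals $-\psi_H(t-s)+\tfrac1s\int_0^s\psi_H(t-v)\,dv+\tfrac1t\int_0^t\psi_H(u-s)\,du-\tfrac1{ts}\int_0^t\int_0^s\psi_H(u-v)\,du\,dv$. Since $\psi_H(x)\rr\log|x|$ pointwise as $H\rr0$ for every $x\neq0$, passing to the limit inside each of the four pieces produces
\be
\log\frac1{|t-s|}+\frac1s\int_0^s\log|t-v|\,dv+\frac1t\int_0^t\log|u-s|\,du-\frac1{ts}\int_0^t\int_0^s\log|u-v|\,du\,dv,
\ee
which is exactly $\log\frac1{|t-s|}+g(t,s)=K(t,s)$ after renaming the integration variables. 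This identifies the candidate limit and reduces the lemma to justifying the four limit passages.

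The one genuinely nontrivial step, and the main obstacle, is justifying the interchange of $\lim_{H\rr0}$ with the three integrals, because the integrands $\psi_H(u-v)$ develop a logarithmic singularity along the diagonal $u=v$ and at the endpoints. I would settle this by dominated convergence, using the elementary fact that $H\mapsto(e^{aH}-1)/H$ is nondecreasing for every $a\in\re$ (its derivative has numerator $e^{aH}(aH-1)+1$, which is nonnegative since it vanishes and is minimized at $aH=0$). Taking $a=2\log|x|$, this yields the uniform bound $|\psi_H(x)|\le\log(1/|x|)$ for $0<|x|\le1$ and $|\psi_H(x)|\le(|x|^{2H_0}-1)/(2H_0)$ for $1\le|x|\le M$, valid for all $0<H\le H_0$. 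As the rectangles of integration are bounded, $|u-v|$ stays bounded there, so $\psi_H(u-v)$ is dominated by an integrable function (local integrability of $\log$ absorbing the singularity), and dominated convergence applies to each integral. The remaining non-integral term $-\psi_H(t-s)\rr\log(1/|t-s|)$ converges directly because $t\neq s$. Combining the four limits gives (\ref{K-con}), where the hypotheses $s\neq t$ and $s,t\neq0$ are used to keep the first term nonsingular and the averaging factors $1/t,1/s$ well defined.
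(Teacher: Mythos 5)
Your proof is correct, but it takes a genuinely different route from the paper. The paper also starts from the four-term decomposition of $K_H$ (its $I_1^H,\dots,I_4^H$, which coincide with your expression after the constant terms $|u|^{2H},|v|^{2H}$ cancel — a cancellation the paper uses silently but you justify structurally, via the observation that $f\mapsto f(t)-\frac1t\int_0^t f$ annihilates constants), and its $\pm\frac{1}{2H}$ counterterms attached to $I_1^H$ and $I_{2,4}^H$ are exactly your ``subtract $1$ from the kernel'' device. Where you diverge is in how the limits are evaluated: the paper computes every integral in closed form (explicit antiderivatives of $|x|^{2H}$ and of $\log|x|$), splits into three sign cases ($0<s<t$, $s<t<0$, $s<0<t$), and matches the resulting algebraic expressions term by term; you instead prove a single pointwise convergence $\psi_H(x)\to\log|x|$ and justify all three limit interchanges at once by dominated convergence, with a sign-agnostic dominating function obtained from the monotonicity of $H\mapsto(e^{aH}-1)/H$ (your verification of that monotonicity — numerator $e^{y}(y-1)+1\geq 0$ with minimum $0$ at $y=0$ — is correct). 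Your argument is shorter and avoids the case analysis entirely; moreover, your domination bounds ($|\psi_H(x)|\leq\log(1/|x|)$ for $|x|\leq 1$, a linear-type bound for $|x|\geq 1$) are essentially the same estimates the paper only introduces later, in Lemma \ref{lemma-dom-con}, via $1-e^{-x}\leq x$ and concavity of $x\mapsto x^{2H}$, so your method would in fact unify the two lemmas. What the paper's heavier computation buys in exchange is the explicit closed form of $I_{2,4}^H(t,s)-\frac{1}{H(2H+2)}$, which is reused downstream: the pointwise bound (\ref{I24-bnd}) derived from it feeds directly into the kernel comparison $K_H(s,t)\leq\log\frac{1}{|t-s|\vee e^{-1/(2H)}}+C(\dl)$ needed for the uniform integrability argument in the proof of Corollary \ref{corollary-gmc}, whereas your softer argument, while fully sufficient for Lemma \ref{lemma-con} itself, would still require extracting such a quantitative bound separately. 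One minor slip in wording only: after introducing $\psi_H$, what equals $-\psi_H(t-s)+\frac1s\int_0^s\psi_H(t-v)\,dv+\frac1t\int_0^t\psi_H(u-s)\,du-\frac1{ts}\int_0^t\int_0^s\psi_H(u-v)\,du\,dv$ is $K_H(t,s)$ itself, i.e.\ $-\frac{1}{2H}$ times the bracket, not the bracket; the subsequent limits are taken with the correct signs, so nothing is affected.
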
 
\begin{proof} We write
$K_{H}(t,s)=I_{1}^{H}(t,s)+ I_{2}^{H}(t,s)+I_{3}^{H}(t,s)+I_{4}^{H}(t,s)$,
where
\be \label{KH-deomp}
\begin{aligned} 
I_{1}^{H}(t,s)&=-\frac{1}{2H}|t-s|^{2H},~~I_{2}^{H}(t,s)=\frac{1}{2H}\frac{1}{t}\int_{0}^{t}|s-u|^{2H}du, \\
I_{3}^{H}(t,s)&=\frac{1}{2H}\frac{1}{s}\int_{0}^{s}|t-u|^{2H}du,~~I_{4}^{H}(t,s)=-\frac{1}{2H}\frac{1}{st}\int_{0}^{t}\int_{0}^{s}|u-v|^{2H}dudv 
\end{aligned} 
\ee
and $K(t,s)=I_{1}^{0}(t,s)+ I_{2}^{0}(t,s)+I_{3}^{0}(t,s)+I_{4}^{0}(t,s)$,
where
$$I_{1}^{0}(t,s)=\log\frac{1}{|t-s|},~~I_{2}^{0}(t,s)=\frac{1}{t}\int_{0}^{t}\log|s-u|du,$$
$$I_{3}^{0}(t,s)=\frac{1}{s}\int_{0}^{s}\log|t-u|du,~~I_{4}^{0}(t,s)=-\frac{1}{st}\int_{0}^{t}\int_{0}^{s}\log|u-v|dudv.$$
We have
$$K_{H}(t,s)= K_{H}(s,t)\text{ and }K(t,s)=  K(s,t),$$ so we can assume that $-\infty <s<t <\infty$.\\

Note that for any $s\not = t$,  
$$
I_{1}^{H}(t,s)+\frac{1}{2H} = -\frac{1}{2H} \big(e^{2H\log|t-s|}  -1\big)
$$
and therefore
$$
\lim_{H \rr 0} I_{1}^{H}(t,s)+\frac{1}{2H}= -\log|t-s|=I_{1}^{0}(t,s).  
$$
Next we deal with $I_{i}^H$, $i=2,...,4$. We consider several cases.\\ 

\textbf{Case 1:} Assume that $0<s<t$. We easily get 
$$
I_{2}^{H}(t,s) +I_{3}^{H}(t,s) = \frac{1}{2H(2H+1)ts}\big(t^{2H+2}+s^{2H+2}-(t-s)^{2H+2}\big). 
$$
For $I_{4}^{H}(t,s)$, note that 
$$
\int_{0}^{t}|u-v|^{2H}\,du=\frac{1}{2H+1} v^{2H+1} +\frac{1}{2H+1} (t-v)^{2H+1}. 
$$
Hence we obtain 
$$
\int_{0}^{s}\int_{0}^{t}|u-v|^{2H}\,du dv= \frac{1}{(2H+1)(2H+2)} \big(t^{2H+2}-(t-s)^{2H+2}+s^{2H+2}\big) 
$$
and therefore
$$  
I_{4}^{H}(t,s) =- \frac{1}{2H(2H+1)(2H+2)ts} \big(t^{2H+2}+s^{2H+2}-(t-s)^{2H+2}\big).
$$ 
Define 
\bn
I^{H}_{2,4}(t,s)=\sum_{i=2}^{4}I_{i}^{H}(t,s). 
\en
It follows that 
\bn
I^{H}_{2,4}(t,s) =  \frac{1}{2H(2H+2)ts} \big(t^{2H+2}+s^{2H+2}-(t-s)^{2H+2}\big).
\en

Thus we have  
\bn
&&I_{2,4}^{H}(t,s) - \frac{1}{H(2H+2)}\\
&&= \frac{1}{2H(2H+2)ts} \big(t^{2H+2}+s^{2H+2}-(t-s)^{2H+2}\big)- \frac{1}{2H(2H+2)ts} \big(t^{2}+s^{2}-(t-s)^{2}\big)  \\
&&= \frac{1}{2H(2H+2)ts} \big(t^{2}(t^{2H}-1)+s^{2}(s^{2H}-1)-(t-s)^{2}((t-s)^{2H}-1)\big). 
\en
Consequently, 
$$\lim_{H \rr 0}I_{2,4}^{H}(t,s)- \frac{1}{H(2H+2)}= \frac{1}{2ts} \big(t^{2}\log t+s^{2}\log s-(t-s)^{2}\log(t-s)\big)$$   
and $$\lim_{H \rr 0}I_{2,4}^{H}(t,s)- \frac{1}{2H}= \frac{1}{2ts} \big(t^{2}\log t+s^{2}\log s -(t-s)^{2}\log(t-s)\big) -\frac{1}{2}.$$

We also easily get that for $0<s<t$ 
$$
I_{2}^{0}(t,s)+I_{3}^{0}(t,s)=\frac{1}{st}\big(t^{2}\log t+ s^{2}\log s -(t-s)^{2}\log(t-s)\big)-2.
$$
Furthermore, 
$$
\int_{0}^{t}\log|u-v|\,du = v \log v +(t-v)\log(t-v) -t. 
$$
Thus we obtain  
\bn
\int_{0}^{s}\int_{0}^{t}\log|u-v|\,du\,dv
&=& -st + \frac{s^{2}}{4}(2\log s-1)+ \frac{t^{2}}{4}(2\log t-1) \\
&&\quad - \frac{(t-s)^{2}}{4}(2\log (t-s)-1) \\
&=& \frac{1}{2}\big(t^{2}\log t +s^{2}\log s -(s-t)^{2}\log(s-t)-3st\big)
\en
and we deduce
\bn
I_{4}^{0}(t,s) =-\frac{1}{2ts}\big(t^{2}\log t +s^{2}\log s -(s-t)^{2}\log(s-t)\big)+\frac{3}{2}. 
\en
Now define 
\bn
I_{2,4}^{0}(t,s)=
\sum_{i=2}^{4}I_{i}^{0}(t,s).
\en
We have 
\bn
I_{2,4}^{0}(t,s)
=\frac{1}{2st}\big(t^{2}\log t+ s^{2}\log s -(t-s)^{2}\log(t-s)\big)-\frac{1}{2}.
\en
Finally we obtain 
$$
 \lim_{H \rr 0}I_{2,4}^{H}(t,s)- \frac{1}{2H} =I_{2,4}^{0}(t,s), 
$$
from which (\ref{K-con}) readily follows for $0<s<t<\infty$. 
\\

\textbf{Case 2:} Assume that $-\infty <s<t <0$. 
We use that 
$$
K_{H}(t,s) = K_{H}(-t,-s)= K_{H}(-s,-t)\text{ and }K(t,s) = K(-t,-s)= K(-s,-t)
$$
to deduce that (\ref{K-con}) follows from the proof of Case 1. \\

\textbf{Case 3:} Assume that $ s<0<t $. We write $s=-u$. 
Repeating the same steps as in Case 1, we get 
\bn
&&I_{2,4}^{H}(t,-u)- \frac{1}{H(2H+2)}  \\
&&= \frac{1}{2H(2H+2)tu} \big((t+u)^{2}((t+u)^{2H}-1)-t^{2}(t^{2H}-1)-u^{2}(u^{2H}-1)\big), 
\en
and 
\bn
I_{2,4}^{0}(t,-u) =\frac{1}{2ut}\big((t+u)^{2}\log(t+u)-t^{2}\log t- u^{2}\log u \big)-\frac{1}{2}.
\en
It follows that 
$$\lim_{H \rr 0}I_{2,4}^{H}(t,-u)- \frac{1}{2H}=I_{2,4}^{0}(t,-u)$$
and we get (\ref{K-con}).\end{proof} 
Let $\phi_{1}, \phi_{2} \in \mathcal S$. Since $X^{H}$ and $X$ are centered Gaussians taking values in $\mathcal S'$, to prove Theorem \ref{theorem-conv}, it is enough to show that
$$
\lim_{H\rr0}\mathbb{E}[\langle X^{H}_{}, \phi_{1} \rangle^{}\langle X^{H}_{}, \phi_{2} \rangle^{}]  = \int_{\re}\int_{\re} K(t,s) \phi_{1}(t)\phi_{2}(s)\,dt\,ds. 
$$
Furthermore, for any $H\in (0,1)$ we have 
$$
\mathbb{E}[\langle X^{H}_{}, \phi_{1} \rangle^{}\langle X^{H}_{}, \phi_{2} \rangle^{}] =\int_{\re}\int_{\re} K_{H}(t,s)\phi_{1}(t)\phi_{2}(s)\,ds \,dt. 
$$
Hence Theorem \ref{theorem-conv} immediately follows from the next lemma. 
\begin{lemma}  \label{lemma-dom-con}
For any $\phi_1,\phi_2 \in \mathcal S$, 
\be \label{ker-L-0}
\lim_{H\rr0}\int_{\re}\int_{\re} K_{H}(t,s)\phi_{1}(t)\phi_{2}(s)\,ds \,dt =\int_{\re}\int_{\re} K_{}(t,s)\phi_{1}(t)\phi_{2}(s)\,ds \,dt.
\ee 
\end{lemma}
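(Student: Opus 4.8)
The plan is to establish (\ref{ker-L-0}) by dominated convergence. Lemma \ref{lemma-con} already supplies the pointwise limit $K_{H}(t,s)\to K(t,s)$ for almost every $(t,s)\in\re^{2}$ (the diagonal $\{s=t\}$ and the axes $\{s=0\}\cup\{t=0\}$ being Lebesgue--null), so it remains to produce a majorant $G(t,s)$, independent of $H$ for all small $H$, with $|K_{H}(t,s)|\le G(t,s)$ and $G(t,s)\,|\phi_{1}(t)|\,|\phi_{2}(s)|\in L^{1}(\re^{2})$. One should note at the outset that the naive bound $|K_{H}(t,s)|\le\sqrt{K_{H}(t,t)K_{H}(s,s)}$ from Cauchy--Schwarz is useless: a direct computation gives $K_{H}(t,t)=|t|^{2H}/(2H(H+1))$, which blows up like $1/(2H)$ as $H\to0$. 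The whole difficulty is that this diagonal divergence cancels off the diagonal, so the majorant must be read off from the explicit cancellation structure already exhibited in the proof of Lemma \ref{lemma-con}.

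Writing $g_{H}(a):=(a^{2H}-1)/(2H)$ for $a>0$, the computations of Lemma \ref{lemma-con} give, for $0<s<t$,
\[
K_{H}(t,s)=-g_{H}(t-s)+\frac{t^{2}g_{H}(t)+s^{2}g_{H}(s)-(t-s)^{2}g_{H}(t-s)}{(2H+2)ts}-\frac{1}{2H+2},
\]
with the analogous identity (from Case 3) when $s<0<t$. Fix $H_{0}\in(0,1)$. The basic elementary estimates, valid uniformly for $H\in(0,H_{0})$, are $0\le-g_{H}(a)\le\log(1/a)$ for $0<a\le1$ (from $e^{x}\ge1+x$), and $0\le g_{H}(a)\le g_{H_{0}}(a)\le C_{H_{0}}a^{2H_{0}}$ for $a\ge1$ (monotonicity of $H\mapsto g_{H}(a)$ for $a\ge1$). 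Hence $|g_{H}(a)|\le|\log a|\,\indic_{\{a\le1\}}+C_{H_{0}}a^{2H_{0}}\indic_{\{a>1\}}$. In particular the first term $-g_{H}(t-s)$ and the constant $-1/(2H+2)$ are already dominated by $\log\tfrac{1}{|t-s|}\indic_{\{|t-s|\le1\}}+C(|t-s|^{2H_{0}}+1)$, uniformly in $H$.

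The main point is the middle term $M_{H}(t,s)$, carrying the dangerous factor $1/(ts)$. Setting $\psi(a)=a^{2}g_{H}(a)$ (so $\psi(0)=0$, and $\psi'(a)=2ag_{H}(a)+a^{2H+1}$ is controlled by the estimates above on bounded sets), its numerator is the second difference $\psi(t)-\psi(t-s)+\psi(s)$, which vanishes on the axes; this is what kills the $1/(ts)$ singularity. Concretely, applying the mean value theorem to $\psi(t)-\psi(t-s)=s\psi'(\xi)$ with $\xi\in(t-s,t)$, and bounding $\psi(s)/(ts)=sg_{H}(s)/t$ by $|g_{H}(s)|$ (using $0<s/t\le1$), one obtains, uniformly in $H\in(0,H_{0})$,
\[
|K_{H}(t,s)|\le C\Big(\log\tfrac{1}{|t-s|}\indic_{\{|t-s|\le1\}}+|\log|t||\,\indic_{\{|t|\le1\}}+|\log|s||\,\indic_{\{|s|\le1\}}+1+|t|^{2H_{0}}+|s|^{2H_{0}}\Big)=:G(t,s).
\]
In the cross-quadrant case $s<0<t$ the numerator is instead $\psi(t-s)-\psi(t)-\psi(-s)$, symmetric in the two positive arguments $t$ and $-s$, and one must apply the mean value theorem to the \emph{larger} of these so that the relevant ratio stays bounded by $2$; this yields the same majorant $G$. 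The case $s<t<0$ reduces to the first by the reflection symmetry $K_{H}(t,s)=K_{H}(-s,-t)$.

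Finally one checks that $G\cdot|\phi_{1}|\,|\phi_{2}|\in L^{1}(\re^{2})$: the diagonal singularity $\log\tfrac{1}{|t-s|}$ is locally integrable in two dimensions (change variables to $t-s$ and $t+s$), the axis singularities $|\log|t||,|\log|s||$ are locally integrable in one dimension, and the polynomial growth $|t|^{2H_{0}}+|s|^{2H_{0}}$ is absorbed by the rapid decay of the Schwartz functions. Dominated convergence together with Lemma \ref{lemma-con} then gives (\ref{ker-L-0}), and hence Theorem \ref{theorem-conv}. The only genuinely delicate step is the uniform control of $M_{H}$ near the origin and the coordinate axes --- that is, turning the exact cancellations of Lemma \ref{lemma-con} into an $H$-independent bound with merely logarithmic singularities --- and in particular performing the mean value estimate on the correct variable in the case $s<0<t$.
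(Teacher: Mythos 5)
Your proof is correct and takes essentially the same route as the paper's: both reuse the closed-form expressions from Lemma \ref{lemma-con} to extract an $H$-uniform integrable majorant --- via $e^{x}\ge 1+x$ for the logarithmic regime, monotonicity/concavity of $a\mapsto (a^{2H}-1)/(2H)$ for $a\ge 1$, and a mean-value/Taylor estimate applied to the larger variable in the mixed-sign quadrants --- and then conclude by dominated convergence. Your packaging through $g_{H}$ and $\psi(a)=a^{2}g_{H}(a)$ is just a tidier form of the paper's add-and-subtract decomposition with the Taylor bound $t^{2H}-(t-s)^{2H}\le 2H(t-s)^{2H-1}s$, so the mathematical content coincides.
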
 
\begin{proof} 
First note that for $x \geq 0$, $1-e^{-x}\leq x$. Therefore for any $0<|t-s|\leq 1$,
$$
\frac{|1-|t-s|^{2H}|}{2H}=\frac{1}{2H}(1-e^{2H \log |t-s|}) \\
\leq- \log |t-s|.$$
Moreover, 
\bn
-\int \int_{|t-s|\leq 1}  \log |t-s||\phi_1(t)||\phi_2(s)| \,dt \,ds &\leq& -\|\phi_2\|_{\infty}\int_{\re} |\phi_1(t)| \int_{|v|\leq 1} \log |v| \,dv \,dt \\
&=&   -2\|\phi_2\|_{\infty}\int_{\re} |\phi_1(t)| \int_{0}^{1} \log(v) \,dv \,dt \\
&\leq & C\|\phi_1\|_{1}\|\phi_2\|_{\infty}.  
\en
Hence, from dominated convergence, it follows that 
$$
\lim_{H\rr 0}\int \int_{|t-s|\leq 1}\frac{1-|t-s|^{2H}}{2H} \phi_1(t)\phi_2(s) \,dt \,ds = -\int \int_{|t-s|\leq 1} \log |t-s|\phi_1(t)\phi_2(s)\,dt \,ds. 
$$
Let $f(x)=x^{2H}/(2H)$, where $x>0$ and $H\in(0,1/2)$. Since $f$ is concave, for any $x\geq 1$, we have $0\leq f(x)-f(1) \leq f'(1)(x-1)$. Therefore for any $|t-s|\geq 1$,
$$
\frac{||t-s|^{2H}-1|}{2H} \leq  |t-s|-1\leq 2(|t|+|s|).$$
Since 
\bn
2\int \int_{|t-s|> 1} (|t|+|s|)|\phi_1(t)||\phi_2(s)| \,dt \,ds  <\infty, 
\en
we get from dominated convergence 
$$ 
\lim_{H\rr 0}\int \int_{|t-s|> 1}\frac{1-|t-s|^{2H}}{2H} \phi_1(t)\phi_2(s) \,dt \,ds = -\int \int_{|t-s|\geq 1} \log |t-s| \phi_1(t)\phi_2(t) \,dt \,ds. 
$$
It remains to show that 
$$
\lim_{H\rr 0}\int_{\re}\int_{\re}\big(I_{2,4}^{H}(t,s)- \frac{1}{2H}\big)\phi_1(t)\phi_2(s)\,dt \,ds=\int_{\re}\int_{\re}I_{2,4}^{0}(t,s)\phi_1(t)\phi_2(s)\,dt \,ds. 
$$
We again consider several cases.\\  

\textbf{Case 1:} Assume that $0<s<t$.
We obviously have 
$$
 \frac{s^{2}}{2H(2H+2)ts} |s^{2H}-1| \leq  \frac{1}{2H(2H+2)}|s^{2H}-1|.
$$
Moreover, from a Taylor expansion, we get $t^{2H}-(t-s)^{2H}\leq 2H(t-s)^{2H-1}s$ and therefore 
\bq  
&& \frac{1}{2H ts}|t^{2}(t^{2H}-1)-(t-s)^{2}((t-s)^{2H}-1)| \nonumber \\
&& \leq   \frac{1}{2Hts} \big( |t^{2}(t^{2H}-1)-(t-s)^{2}(t^{2H}-1)| \nonumber \\
&&\quad +|(t-s)^{2}(t^{2H}-1)-(t-s)^{2}((t-s)^{2H}-1)|\big)\nonumber  \\
&& \leq   \frac{1}{2Hts} \big(|t^{2H}-1||s^{2}-2st|+2H(t-s)^{2}(t-s)^{2H-1}s\big) \nonumber \\
&& \leq   \frac{1}{H}|t^{2H}-1|+(t-s)^{2H}\nonumber . 
\eq
It follows that for any $0< s< t$, 
\be \label{I24-bnd} 
\big|I_{2,4}^{H}(t,s)- \frac{1}{H(2H+2)}\big|\leq \frac{1}{2H}|s^{2H}-1| +   \frac{1}{2H}|t^{2H}-1|+(t-s)^{2H}:=L_{1}^{H}(t,s).
\ee

\textbf{Case 1bis:} Assume that $0<t<s$.
Since $I_{2,4}^{H}(t,s)=I_{2,4}^{H}(s,t)$ we get
$$
\big|I_{2,4}^{H}(t,s)- \frac{1}{H(2H+2)}\big| \leq L_{1}^{H}(s,t).
$$

\textbf{Case 2:} Assume that  $-\infty<-t,-s<0$.
Since $I_{2,4}^{H}(-t,-s) = I_{2,4}^{H}(t,s)$, we have 
$$\big|I_{2,4}^{H}(-t,-s)- \frac{1}{H(2H+2)}\big| \leq L_{1}^{H}(t,s).$$

\textbf{Case 3:} Assume that $ -s<0<t $ and $s\leq t$. 
Recall that when $ -s<0<t $,
\bn
&&I_{2,4}^{H}(t,-s)- \frac{1}{H(2H+2)}  \\
&&= \frac{1}{2H(2H+2)ts} \big((t+s)^{2}((t+s)^{2H}-1)-t^{2}(t^{2H}-1)-s^{2}(s^{2H}-1)\big). 
\en
Since $s<t$ we have 
\bn
\big| \frac{s^{2}}{2H(2H+2)ts}(s^{2H}-1)  \big|\leq \frac{1}{2H(2H+2)}|s^{2H}-1|.
\en
Using a Taylor expansion, we get $(t+s)^{2H}-t^{2H}\leq 2H t^{2H-1}s$ and therefore 
\bn
&& \frac{1}{2H ts}|(t+s)^{2}((t+s)^{2H}-1)-t^{2}(t^{2H}-1)| \nonumber \\
&& \leq   \frac{1}{2Hts} \big(|(t+s)^{2}((t+s)^{2H}-1) -(t+s)^{2}(t^{2H}-1)|  \nonumber \\
&&\qquad \qquad + |(t+s)^{2}(t^{2H}-1)-t^{2}(t^{2H}-1)|\big)\nonumber  \\
&& \leq   \frac{1}{2Hts} \big(2H(t+s)^{2}t^{2H-1}s+|t^{2H}-1||s^{2}+2st|\big) \nonumber \\
&& \leq  (t+s)^{2H}+  \frac{3}{2H}|t^{2H}-1|.
\en
It follows that $$\big|I_{2,4}^{H}(t,-s)- \frac{1}{H(2H+2)}\big|\leq \frac{1}{2H}|s^{2H}-1| +\frac{1}{H}|t^{2H}-1|+(t+s)^{2H}:=L_{2}^{H}(t,s).$$

\textbf{Case 4:} Assume that $ -s<0<t $ and $t < s$. Repeating the same lines as in Case 3 but exchanging the roles of $t$ and $s$ we get 
$$
\Big|I_{2,4}^{H}(t,-s)- \frac{1}{H(2H+2)}\Big| \leq L_{2}^{H}(s,t).\\
$$

\textbf{Case 5:} Assume that $ -t<0<s $ and $t< s$. 
Since $I_{2,4}^{H}(-t,s) = I_{2,4}^{H}(s,-t)$, it follows from Case 3 that 
\bn
\Big|I_{2,4}^{H}(-t,s)- \frac{1}{H(2H+2)}\Big| \leq L_{2}^{H}(s,t).\\
\en

\textbf{Case 6:} Assume that $ -t<0<s $ and $s <t$. 
Using again $I_{2,4}^{H}(-t,s) = I_{2,4}^{H}(s,-t)$, we get from Case 4 that
\bn
\Big|I_{2,4}^{H}(-t,s)- \frac{1}{H(2H+2)}\Big| \leq L_{2}^{H}(t,s).
\en

Finally, we clearly have 
$$
\int_{\re}\int_{\re}\Big(\sup_{H\in (0,1/2)}\big(L_{1}^{H}(s,t)+L_{1}^{H}(t,s)+L_{2}^{H}(s,t)+L_{2}^{H}(t,s)\big)\Big) \phi_1(t)\phi_2(s)\,dt \,ds <\infty. 
$$
Thus we conclude the proof using the dominated convergence theorem.

\end{proof} 
Before we prove Corollary \ref{corollary-gmc}, we recall an approximation of $\log_{+}(1/|x|)$, from Example 2.2 in \cite{Rhodes-Vargas14}.
For any $x\in \re$ we define the cone $\mathcal C(x)$ in $\re\times \re_{+}$: 
$$
\mathcal C(x) = \Big\{ (y,t) \in \re\times \re_{+} ; \, |x-y| \leq  \frac{t\wedge 1}{2} \Big \}. 
$$
A direct computation gives 
$$
f(x) =\int_{\mathcal C(0)\cap \mathcal C(x) } \frac{dy\,dt}{t^{2}} = \log_{+}\frac{1}{|x|}.
$$
We approximate $f$ by the following functions,
\be \label{log-approx} 
\tilde f_{\eps}(x) := \int_{\mathcal C(0)\cap \mathcal C(x); \, \eps < t <\infty }\frac{dy\,dt}{t^{2}}, \quad \textrm{for } \eps>0.
\ee
\paragraph{Proof of Corollary \ref{corollary-gmc}}
From Theorem 25 in \cite{shamov16} we obtain that Corollary \ref{corollary-gmc} follows from (\ref{ker-L-0}), provided we show that $\{\xi^{H}\}_{H\in (0,1)}$ are uniformly integrable.\medskip \\ 
First note that all $\xi^{H}$, $0<H<1$, can be constructed on the same probability space, as a convolution of the same Brownian motion with different kernels which depend on $H$ (see \cite{Man-Van68}). 
From (\ref{KH-deomp}) and (\ref{I24-bnd}) we have for $\dl \leq s,t\leq 1$,  
\bn
 K_{H}(s,t) &\leq & \frac{1}{2H}|1-|t-s|^{2H}|+  \frac{1}{2H}|s^{2H}-1| +   \frac{1}{2H}|t^{2H}-1|+(t-s)^{2H} + C \\
 &\leq &  \frac{1}{2H}|1-|t-s|^{2H}| +C(\dl). 
 \en
Using $1-e^{-x} \leq x$ for $x>0$ we get for all $\dl \leq s,t\leq 1$, 
\bn
 K_{H}(s,t) &\leq & -\log|t-s| + C(\dl). 
\en
On the other hand for $\dl \leq s,t\leq 1$ we have 
\bn
 K_{H}(s,t) &\leq & \frac{1}{2H}+ C(\dl). 
\en
It follows that 
\bn
 K_{H}(s,t) &\leq & \big(\frac{1}{2H}\big) \wedge \log\frac{1}{|t-s|} + C(\dl) \\
 &= & \log\frac{1}{|t-s| \vee e^{-\frac{1}{2H}}} + C(\dl). 
\en
Let $\eps \in(0,1)$ be arbitrary small. Let $\wt X^{\eps}$ be a centred Gaussian random field with the covariance kernel $\tilde f_{\eps}+C(\dl)$, where $\tilde f_{\eps}$ is given in (\ref{log-approx}), i.e. 
$$
\wt K_{\eps}(t,s) := \mathbb{E}[\wt X^{\eps}_{t} \wt X^{\eps}_{s}] = \tilde f_{\eps}(t-s) +C(\dl), \quad -\infty <t,s <\infty. 
$$

Since $\eps \in(0,1)$, we get from (\ref{log-approx}) for $0< x \leq1$,
\bn
 \tilde f_{\eps}(x)&=&\int_{\mathcal C(0)\cap \mathcal C(x); \, \eps < r <\infty }\frac{dy\,dr}{r^{2}} \\
 &=&\int_{x\vee \eps}^{\infty}\frac{dr}{r^{2}}\int^{(r\wedge 1)/2}_{x-(r\wedge 1)/2}dy \\
& =& -\log(x\vee \eps) -\frac{x}{x\vee\eps} +1. 
\en
Note that $\tilde f_{\eps}$ is an even function and therefore,  
$$
\wt K_{\eps}(t,s) \geq \log\frac{1}{|t-s|\vee \eps} +C(\dl), \quad 0< t,s \leq1. 
$$ 
We define the following of measure 
\bn
\wt \xi_{\gamma}^{\eps}(dt) = e^{\gamma \wt X^{\eps}_{t}-\frac{\gamma^{2} }{2}\mathbb{E}[(\wt X_{t}^{\eps})^{2}]}dt,  \quad \dl \leq t\leq 1. 
\en 
Recall that $\gamma^{2}< 2$ by our assumption. Then from the proof of Proposition 3.5 in \cite{Robert-Vargas10} it follows that there exists $p=p(\gamma) >1$, such that 
$$
\sup_{\eps>0}\mathbb E\big[\big(\wt \xi_{\gamma}^{\eps}([0,1])\big)^{p}\big] <\infty. 
$$
By chosing $H=H(\eps) = (-2\log \eps)^{-1}$, we note that $K_{H}(s,t) \leq \wt K_{\eps}(s,t)$, and  from the comparison principal with $F(x) =x^{p}$ (see Corollary A.2 in \cite{Robert-Vargas10}), we get 
$$
\sup_{H\in(0,1)}\mathbb E\big[\xi_{\gamma}^{H}([\dl,1])^{p} \big] <\infty. 
$$
We therefore conclude that $\{\xi^{H}\}_{H\in (0,1)}$ are uniformly integrable. 
\qed 

\section*{Acknowledgments}
We are very grateful to Vincent Vargas, an anonymous referee and the associate editor
whose numerous useful comments enabled us to significantly improve this paper.


\bigskip

\end{document}